%%%%%%%%%%%%%%%%%%%%%%% file typeinst.tex %%%%%%%%%%%%%%%%%%%%%%%%%
%
% This is the LaTeX source for the instructions to authors using
% the LaTeX document class 'llncs.cls' for contributions to
% the Lecture Notes in Computer Sciences series.
% http://www.springer.com/lncs       Springer Heidelberg 2006/05/04
%
% It may be used as a template for your own input - copy it
% to a new file with a new name and use it as the basis
% for your article.
%
% NB: the document class 'llncs' has its own and detailed documentation, see
% ftp://ftp.springer.de/data/pubftp/pub/tex/latex/llncs/latex2e/llncsdoc.pdf
%
%%%%%%%%%%%%%%%%%%%%%%%%%%%%%%%%%%%%%%%%%%%%%%%%%%%%%%%%%%%%%%%%%%%

\documentclass[runningheads,a4paper]{llncs}

\usepackage{amssymb}
\setcounter{tocdepth}{3}
\usepackage{graphicx}
\graphicspath{{images/},{prebuiltimages/}}
\usepackage{url}
\usepackage[utf8]{inputenc}

\usepackage{subfigure}
\usepackage{color}
\usepackage{algpseudocode}
\usepackage{mystyle}
\usepackage{title_ssvm}
\usepackage{enumitem}
\usepackage{soul}
\usepackage{ulem}

\newcommand{\Tik}{\star}
\newcommand{\LS}{\star}
\newcommand{\CLS}{\star}
\newcommand{\HT}{\star}
\newcommand{\LO}{\star}

\newcommand{\ST}{\star}
\newcommand{\TV}{\star}

\newcommand{\Uf}{U_f}

\newcommand{\uf}{u_f}
\newcommand{\ufO}{u_{f_0}}
\newcommand{\Mmf}{\Mm_f}
\newcommand{\MmfO}{\Mm_{f_0}}
\newcommand{\Jf}{J_f}

\newcommand{\duf}{\tilde{u}_f}
\newcommand{\dufO}{\tilde{u}_{f_0}}
\newcommand{\dMmf}{\tilde{\Mm}_f}

\newcommand{\dJf}{\tilde{J}_f}

\newlength{\savedbelowdisplayskip}
\newlength{\savedbelowdisplayshortskip}
\newlength{\savedabovedisplayskip}
\newlength{\savedabovedisplayshortskip}
\newcommand{\changemargin}[3]{%
  \setlength{\savedbelowdisplayskip}{\belowdisplayskip}%
  \setlength{\savedbelowdisplayshortskip}{\belowdisplayshortskip}%
  \setlength{\savedabovedisplayskip}{\abovedisplayskip}%
  \setlength{\savedabovedisplayshortskip}{\abovedisplayshortskip}%
  \addtolength{\belowdisplayskip}{#2}\addtolength{\belowdisplayshortskip}{#2}%
  \addtolength{\abovedisplayskip}{#1}\addtolength{\abovedisplayshortskip}{#1}%
  #3%
  \setlength{\belowdisplayskip}{\savedbelowdisplayskip}%
  \setlength{\belowdisplayshortskip}{\savedbelowdisplayshortskip}%
  \setlength{\abovedisplayskip}{\savedabovedisplayskip}%
  \setlength{\abovedisplayshortskip}{\savedabovedisplayshortskip}%
}

\setlength{\parskip}{0pt}

%
% Please activate this command (\DAGMreviewversion) for a draft
% version of your manuscript which will be used during the review
% process.
% DO NOT USE this command for the camera ready version of your paper.
%
%\SSVMreviewversion

\usepackage[textwidth=4cm, textsize=footnotesize]{todonotes}
\usepackage{color}

\usepackage{url}
\urldef{\mailsa}\path|{charles-alban.deledalle, nicolas.papadakis}@math.u-bordeaux.fr|
\urldef{\mailsb}\path|joseph.salmon@telecom-paristech.fr|

\usepackage{ifthen}
\newboolean{arxiv}

\begin{document}
\sloppy
\mainmatter

\setboolean{arxiv}{false}

%
% Insert your submission number here
%
\def\SSVM15SubNumber{007}

%
% Replace with your title
%
\title{\ssvmtitle
% such as\\
%   $\ell^1$ regularization,
%   total-variation and nonlocal-means
}

%
% DO NOT MODIFY these for the draft version that is used for the
% review process.
%
%
\titlerunning{\ssvmtitleshort}
\author{Charles-Alban Deledalle\inst{1,2}, Nicolas Papadakis\inst{1,2}, Joseph Salmon\inst{3}}

\authorrunning{Charles-Alban Deledalle \and Nicolas Papadakis \and Joseph Salmon}

\institute{%
$^1$ Univ. Bordeaux, IMB, UMR 5251, F-33400 Talence, France.\\
$^2$ CNRS, IMB, UMR 5251, F-33400 Talence, France.\\ \mailsa\\
$^3$ Institut Mines-Télécom, Télécom ParisTech,
CNRS LTCI, Paris, France.\\ \mailsb}

\maketitle

\begin{abstract}
Bias in image restoration algorithms
can hamper further analysis,
typically when the intensities have a physical meaning of interest, e.g.,
in medical imaging.
We propose to suppress a part of the bias
-- {\it the method bias} -- while leaving unchanged
the other unavoidable part -- {\it the model bias}.
Our debiasing technique can be used for any locally affine estimator
including $\ell_1$ regularization, anisotropic total-variation and some
nonlocal filters.
\end{abstract}

\section{Introduction}

Restoration of an image of interest from its single noisy degraded observation
necessarily requires imposing some regularity or {\it prior} on the solution.
Being often only crude approximations of the true underlying signal of interest,
such techniques always introduce a bias towards the {\it prior}.
However, in general, this is not the only source of bias.
In many cases, even though the model was perfectly accurate,
the method would remain biased.
This part of the bias often emerges from technical reasons,
e.g., when approaching an NP-hard problem by an easier one
(typically, using the $\ell_1$ convex relaxation of an $\ell_0$ pseudo-norm).
%\charles{Ca vous va la parenthese?}

It is well known that reducing bias is not always favorable in terms
of mean square error because of the so-called bias-variance trade-off.
It is important to highlight that a debiasing procedure is expected to
re-inject part of the variance, therefore increasing the residual noise.
Hence, the mean square error is not always expected to be improved by such techniques.
Debiasing is nevertheless essential
in applications where the image intensities have a physical sense and critical decisions
are taken from their values. For instance, the authors of \cite{de2013extended} suggest using
image restoration techniques to estimate a temperature map within a tumor tissue
for real time automatic surgical intervention.
% It is then crucial for such techniques that the estimated temperature is not
% biased, a remaining residual noise being favorable compared to
% an uncontrolled bias.
In such applications, it is so crucial that the estimated temperature is not biased.
A remaining residual noise is indeed favorable compared to
an uncontrolled bias.

We introduce a debiasing technique that suppresses the extra bias -- {\it the method bias} --
emerging from the choice of the method and leave unchanged the bias that is
due to the unavoidable choice of the model
-- {\it the model bias}.
To that end, we rely on
the notion of model subspace
essential to carefully define different notions of bias.
This leads to a mathematical definition of
debiasing for any locally affine
estimators that respect some mild assumptions.

Interestingly, our debiasing definition for the
$\ell_1$ synthesis
(also known as LASSO \cite{tibshirani1996regression} or Basis Pursuit \cite{Chen_Donoho_Saunders98})
recovers a well known
debiasing scheme called refitting that goes back to the ``Hybrid LASSO'' \cite{efron2004least}
(see \cite{Lederer13} for more details).

For the $\ell_1$ analysis \cite{elad2007analysis},
including the $\ell_1$ synthesis but also the anisotropic total-variation
\cite{rudin1992nonlinear},
we show that debiasing can be performed with the same complexity as
the primal-dual algorithm of \cite{CP} producing the biased estimate.

In other cases, e.g.,
for an affine version of the popular nonlocal-means
\cite{buades2005nlmeans},
we introduce an iterative scheme that requires
only a few run of an algorithm of the same complexity as
the original one producing the biased estimate.

%%% \charles{Je veux bien qu'on redicute de ça...}
%%% \jrs{It is to be noted that a variation on the debiasing of the Lasso has
%%% recently been proposed \cite{osher2014sparse} with the same perspective as ours.
%%% Inspired by differential inclusion, it differs form ours by adding a sign constraint to impose that
%%% the debiased estimator has the same sign as the original solution.}

\section{Background}

We consider observing $f = f_0 + w \in \RR^P$ a corrupted linear observation
of an unknown signal $u_0 \in \RR^N$  such that $f_0 = \Phi u_0$
where $\Phi \in \RR^{N \times P}$ is a linear operator
and $w$ is a random vector modeling the noise fluctuations. We assume that $\EE[w] = 0$
%and $\EE[w w^t] = \sigma^2 \Id$,
where $\EE$ is the expectation operator.
The linear operator $\Phi$ is a degrading operator typically
with $P\leq N$ and with
a non-empty kernel encoding some information loss
such that the problem becomes ill-posed.

We focus on estimating the unknown signal $u_0$.
Due to the ill-posedness of the observation model,
we consider variational approaches that attempt to recover $u_0$ from
the single observation $f$ as a solution of the optimization problem
\changemargin{-2pt}{-2pt}{%
\begin{equation}\label{eq:variationnal_model}
  \uf^\star \in \uargmin{u \in \RR^N} E(u, f)~.
\end{equation}}%
where $E : \RR^N \times \RR^P \to \RR$ is assumed
to have at least one minimum.
The objective $E$ is typically chosen to promote some
structure, e.g., smoothness, piece-wise constantness, sparsity, etc.,
that is captured by the so-called {\it model subspace} $\Mmf^*$.
Providing $\uf^\star$ is uniquely defined and differentiable at $f$,
we define $\Mmf^\star \subseteq{\RR^N}$ as the tangent affine subspace at $f$ of
the mapping $f \mapsto \uf^\star$, i.e.,
\changemargin{-6pt}{-3pt}{%
\begin{equation}
  \Mmf^\star = \uf^\star + \Ima[ \Jf^\star ] = \enscond{u \!\in\! \RR^N}{\exists z \!\in\! \RR^P, u = \uf^\star + \Jf^\star z}
  \;\;\text{with}\;\;
  \Jf^\star = \left.\frac{\partial \uf^\star}{\partial f}\right|_f
\end{equation}}%
where $\Jf^\star$ is the Jacobian operator at $f$ of the mapping $f \!\mapsto\! \uf^\star$
(see \cite{vaiter2014model} for an alternative but related definition
of model subspace).
%Remark that $\Mmf^\star = \uf^\star + \Ima[ \Uf^\star ]$
%for any matrix $\Uf^\star$ with the same column space
%as  $\Jf^\star$.
When $\uf^\star \!\in\! \Ima[ \Jf^\star ]$, the model subspace
restricts to the linear vector subspace $\Mmf^\star \!=\! \Ima[ \Jf^\star ]$.
In the rest of the paper, $\uf^\star$ is assumed to be
differentiable at $f_0$ and for almost all $f$.

\begin{exmp}
The least square estimator
constrained to the affine subspace $C = b + \Ima[A]$,
$b \in \RR^N$ and $A \in \RR^{N \times Q}$,
is a particular instance of \eqref{eq:variationnal_model} where
\changemargin{-2pt}{-1pt}{%
\begin{equation}\label{eq:constrained_least_square_energy}
  E(u, f) = \norm{\Phi u - f}^2 + \iota_C(u)
\end{equation}}%
and for any set $C$, $\iota_C$ is its indicator function:
$\iota_C(u) = 0$ if $u \in C$, $+\infty$ otherwise.
%The least square estimator has a unique explicit solution of minimum Euclidean norm given by
The solution of minimum Euclidean norm is unique and given by
\changemargin{-1pt}{-4pt}{%
\begin{equation}\label{eq:contrained_least_square}
  \uf^\CLS = b + A (\Phi A)^+  (f - \Phi b)
\end{equation}}%
where for a matrix $M$, $M^+$ is its Moore-Penrose pseudo-inverse.
The affine constrained least square
restricts the solution $\uf^\CLS$ to the
affine model subspace $\Mmf^\CLS \!=\! b + \Ima[ A (\Phi A)^t ]$
(as $\Ima[M^+] \!=\! \Ima[M^t]$).
Taking $C \!=\! \RR^N$ with for instance $Q\!=\!N$, $A = \Id$ and $b = 0$, leads to an
unconstrained solution
$
  \uf^\LS \!=\! \Phi^+ f
$
whose model subspace is $\Mmf^\LS \!=\! \Ima[ \Phi^t ]$ reducing to $\RR^N$ when $\Phi$ has full column rank.
\end{exmp}

\begin{exmp}
The Tikhonov regularization (or Ridge regression)
\cite{tikhonov43,hoerl1970ridge}
is another instance of \eqref{eq:variationnal_model} where,
for some parameter $\lambda > 0$ and matrix $\Gamma \in \RR^{L \times N}$,
\begin{equation}\label{eq:tikhonov}
  E(u, f) = \frac{1}{2} \norm{\Phi u - f}^2 + \frac{\lambda}{2} \norm{\Gamma u}^2~.
\end{equation}
Provided $\Ker \Phi \cap \Ker \Gamma \!=\! \{ 0 \}$,
$\uf^\Tik$ is uniquely defined as
$
  \uf^\Tik = (\Phi^t \Phi + \lambda \Gamma^t \Gamma)^{-1} \Phi^t  f
$
which has a linear model subspace given by $\Mmf^\Tik = \Ima[ \Phi^t ]$.
%Note that when $\lambda = 0$, \eqref{eq:tikhonov_solution} boils down
%to the least square estimate providing $\Phi$ has full column rank.
\end{exmp}

\begin{exmp}
The hard thresholding \cite{donoho1994ideal},
used when $\Phi = \Id$ and $f_0$ is supposed to be sparse,
is a solution of \eqref{eq:variationnal_model} where,
for some parameter $\lambda > 0$,
\begin{equation}
  E(u, f) = \frac{1}{2} \norm{u - f}^2 + \frac{\lambda^2}{2} \norm{u}_0~,
\end{equation}
where $\norm{u}_0 = \#\enscond{i \in [P]}{u_i \ne 0}$ counts the number of non-zero entries of $u$
and $[P] = \{1, \ldots, P\}$.
The hard thresholding operation writes
\begin{equation}
  (\uf^\HT)_{\Ii_f} = f_{\Ii_f} \qandq (\uf^\HT)_{\Ii_f^c} = 0
\end{equation}
where $\Ii_f \!=\! \enscond{i \!\in\! [P]}{|f_i| \!>\! \lambda}$ is
the support of $\uf^\HT$,
$\Ii_f^c$ is the complement of $\Ii_f$ on $[P]$,
and for any vector $v$,
$v_{\Ii_f}$ is the sub-vector whose elements are indexed by $\Ii_f$.
As $\uf^\HT$ is piece-wise differentiable,
its model subspace is only defined for almost all $f$ as
$\Mmf^\HT \!=\! \smallenscond{u \!\in\! \RR^N}{u_{\Ii^c_f} \!=\! 0} \!=\! \Ima[\Id_{\Ii_f}]$,
where for any matrix $M$,
$M_{\Ii_f}$ is the sub-matrix
whose columns are indexed by $\Ii_f$.
Note that $\Id_{\Ii_f} \!\in\! \RR^{N \times \# \Ii_f}$.%
%$M_{\Ii_f} = M \Id_{\Ii_f}$
%$c_{\Ii_f} = \Id_{\Ii_f}^t v$
\end{exmp}

\begin{exmp}
The soft thresholding \cite{donoho1994ideal},
used when $\Phi = \Id$ and $f_0$ is supposed to be sparse, is
another particular solution of \eqref{eq:variationnal_model} where
\begin{equation}\label{eq:ST}
  E(u, f) = \frac{1}{2} \norm{u - f}^2 + \lambda \norm{u}_1~,
\end{equation}
with $\norm{u}_1 = \sum_i |u_i|$ the $\ell_1$ norm of $u$.
The soft thresholding operation writes
\begin{equation}
  (\uf^\ST)_{\Ii_f} = f_{\Ii_f} - \lambda \sign(f_{\Ii_f}) \qandq (\uf^\ST)_{\Ii_f^c} = 0~,
\end{equation}
where $\Ii_f$ is defined as above, and,
as for the hard thresholding:
$\Mmf^\ST = \Ima[\Id_{\Ii_f}]$.
\end{exmp}

\section{Bias of reconstruction algorithms}

Due to the ill-posedness of our observation model and without any assumptions on $u_0$,
one cannot ensure the noise variance to be reduced while
keeping the solution $\uf^\star$ unbiased.
Recall that the statistical bias is defined as the difference
\begin{equation}
  \text{Statistical bias} = \EE[\uf^\star] - u_0~.
\end{equation}
An estimator is said unbiased when its statistical bias vanishes.
Unfortunately the statistical bias is difficult to manipulate when
$f \mapsto \uf^\star$ is non linear.
We therefore restrict to a definition of bias at $f_0 = \Phi u_0$ as the error
$\ufO^\star - u_0$.
Note that when $f \mapsto \uf^\star$  is affine, both definitions match (the expectation being linear).
Most methods are biased since, without assumptions, $u_0$ cannot be guaranteed
to be in complete accordance with
the model subspace, i.e., $u_0 \notin \MmfO^\star$.
It is then important to distinguish techniques that are only biased
due to a problem of modeling to the ones that are biased
due to the method. We then define the model bias and the method bias as
the quantities
\begin{equation}
  \ufO^\star - u_0
  =
  \underbrace{\ufO^\star - \Pi_{\MmfO^\star}(u_0)}_{\text{Method bias}}
  -
  \underbrace{\Pi_{(\MmfO^\star)^\bot}(u_0)}_{\text{Model bias}}~,
\end{equation}
where for any set $S$, $\Pi_S$ denotes the orthogonal projection on $S$
and $S^\bot$ denotes its orthogonal set.
We now define a methodically unbiased estimator as follows.

\begin{defn}
An estimator $\uf^\star$ is \underline{\smash{methodically unbiased}} if
\begin{equation*}
  \forall u_0 \in \RR^N, \quad
  \ufO^\star = \Pi_{\MmfO^\star}(u_0)
\end{equation*}
\end{defn}

We also define the weaker concept of weakly unbiased estimator as follows.

\begin{defn}
An estimator $\uf^\star$ is \underline{\smash{weakly unbiased}} if
\begin{equation*}
  \forall u_0 \in \MmfO^\star, \quad
  \ufO^\star = u_0.
\end{equation*}
The quantity $\ufO^\star - u_0$ for $u_0 \in \MmfO^\star$ is
called the weak bias of $\uf^\star$ at $u_0$.
\end{defn}

Remark that a methodically unbiased estimator is also weakly unbiased.

\paragraph{Examples.} The unconstrained least-square estimator is
methodically unbiased since
$\ufO^\LS \!=\! \Phi^+ f_0 \!=\! \Phi^+ \Phi u_0 \!=\! \Pi_{\Ima[\Phi^t]}(u_0) \!=\! \Pi_{\MmfO^\star}(u_0)$.
Moreover, being linear,
it becomes statistically unbiased
whenever $\Phi$ has full column rank since $\Phi^+ \Phi \!=\! \Id$.
However the constrained least-square estimator is only weakly unbiased:
its methodical bias only vanishes
when $u_0 \!\in\! \MmfO^\CLS$, i.e.,
when there exists $t_0 \in \RR^Q$ such that $u_0 \!=\! b + A (\Phi A)^t t_0$.
The hard thresholding is also methodically unbiased
remarking that $\ufO^\star$ is
the orthogonal projection on $\MmfO^\star \!=\! \Ima[\Id_{\Ii_{f_0}}]$.
Unlike the unconstrained least-square estimator,
Tikhonov regularization has a non zero weak bias.
The soft thresholding is also known to be biased \cite{fan2001variable}
and its weak bias is given by $-\lambda \Id_{\Ii_{f_0}} \sign(f_0)_{\Ii_{f_0}}$.
Often, estimators are said to be unbiased when
they are actually only weakly unbiased.

\section{Definitions of debiasing}

Given an estimate $\uf^\star$ of $u_0$,
we define a debiasing of $\uf^\star$ as follows.

\begin{defn}\label{eq:def_debiasing}
  An estimator $\duf^\star$ of $u_0$ is a \underline{\smash{weak debiasing}} of $\uf^\star$
  if it is weakly unbiased and $\dMmf^\star = \Mmf^\star$ for almost all $f$,
  with $\dMmf^\star$ the model subspace of $\duf^\star$ at $f$.
  Moreover, it is a \underline{\smash{methodical debiasing}} if it is also methodically
  unbiased.
\end{defn}

%Remark that the condition $\tilde{\Mm}^\star \subset \Mm^\star$,
%imposes that $\tilde{u}^\star$ belongs to the subspace model of $u^\star$, i.e., the debiased
%version only differs from $u^\star$ in terms of method bias.
%The strict equality $\tilde{\Mm}^\star = \Mm^\star$ is necessary to preserve completly
%the structure of the space model and avoid degenerated estimators that
%would completly loss the structure of $\Mm^\star$ to be admissible.

\paragraph{Examples.}
The unconstrained least square estimator is a methodical debiasing of
the Tikhonov regularization,
since it is a methodically unbiased estimator of $u_0$
and they share the same model subspace.
The hard thresholding is a methodical debiasing of
the soft thresholding, for the same reasons.
\medskip

A good candidate for debiasing $\uf^\star$ is the constraint least squares
on $\Mmf^\star$:
\changemargin{-1pt}{-4pt}{%
\begin{equation}\label{eq:debiasing}
  \duf^\star
  = \uf^\star + \Uf^\star (\Phi \Uf^\star)^+ (f - \Phi \uf^\star)
  \in \uargmin{u \in \Mmf^\star} \norm{\Phi u - f}^2
\end{equation}}%
where $\Uf^\star \!\in \! \RR^{N \times n}$ with $n \!=\! \rank[\Jf^\star]$
is a matrix whose columns form a basis of $\Ima[\Jf^\star]$.
%Remark that $\Jf^\star (\Phi \Jf^\star)^+\!=\!\Uf^\star (\Phi \Uf^\star)^+$
%since they are both the unique minimum Euclidean norm solution of
%the least squares on the linear subspace $\Ima[\Jf^\star] = \Ima[\Uf^\star]$,
%see Eq.~\eqref{eq:contrained_least_square}.
Let $V_f^\star \!\in \! \RR^{n \times P}$ be a matrix such that $J_f^\star = \Uf^\star V_f^\star$.
The following theorem shows that under mild assumptions this choice corresponds to
a debiasing of $\uf^\star$.

% {\color{red} \sout{at least when $f \mapsto \uf^\star$ is locally affine
% (e.g., piece-wise affine)
% and $\Phi$ is invertible on $\Mmf^\star$.} 
% re-dit dans le theoreme, on peut raccourcir avec quelque chose comme: at least when some properties are checked.}

%Remark that when $u^\star \in \Ima[U^\star]$,
%$\tilde{u}^\star$ reduces to the linear operator defined as
%\begin{equation}
%  \tilde{u}^\star
%  = U^\star (\Phi U^\star)^+ f~.
%\end{equation}
%In the following, $u^\star$, $U^\star$ and $\Mm^\star$ are considered to be fixed
%with respect to $f$ and $\tilde{u}^\star$ is the debiased candidate given in \eqref{eq:debiasing}.
%{\color{red} or we can either consider that $J^\star$ is locally constant.
%I believe it's the case for all estimators we consider, trivial for
%$\ell_1$ analysis and mexican hat nlmeans, to check for $\ell_{1,d}$.}
%Hence, $\tilde{\Mm}^\star = u^\star + \Ima[U^\star (\Phi U^\star)^+]$.

\begin{thm}\label{lem:tilde_unbiased}
  Assume that $f \mapsto \uf^\star$ is locally affine for almost all $f$
  and that $\Phi$ is invertible on $\Mmf^\star$.
  Then $\duf^\star$ defined in Eq.~\eqref{eq:debiasing}
  is a weak debiasing of $\uf^\star$.
\end{thm}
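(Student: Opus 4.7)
The plan is to verify both requirements of Definition~\ref{eq:def_debiasing} for $\duf^\star$: the equality $\dMmf^\star = \Mmf^\star$ for almost every $f$, and weak unbiasedness. The central algebraic fact I will lean on throughout is that the hypothesis ``$\Phi$ invertible on $\Mmf^\star$'' forces $\Phi \Uf^\star$ to have full column rank, since $\Uf^\star$ is by construction a basis for the linear part of $\Mmf^\star$. This in turn gives the key cancellation $(\Phi \Uf^\star)^+ \Phi \Uf^\star = \Id_n$, which I will invoke repeatedly.

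For the model subspace, I would first note that the very form $\duf^\star - \uf^\star = \Uf^\star (\Phi \Uf^\star)^+ (f - \Phi \uf^\star)$ places $\duf^\star - \uf^\star$ in $\Ima[\Uf^\star] = \Ima[\Jf^\star]$, so the affine sets $\duf^\star + \Ima[\Jf^\star]$ and $\Mmf^\star$ already coincide. To upgrade this to $\dMmf^\star = \Mmf^\star$, I still must check that the tangent direction $\Ima[\dJf^\star]$ equals $\Ima[\Jf^\star]$. This is where local affinity enters: on a neighborhood of $f$, $\Jf^\star$ is constant, hence $\Ima[\Jf^\star]$ is locally constant and the factorization $\Jf^\star = \Uf^\star V_f^\star$ can be taken with $\Uf^\star$ and $V_f^\star$ locally constant. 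Differentiating Eq.~\eqref{eq:debiasing} and using the cancellation to collapse two terms then yields $\dJf^\star = \Uf^\star (\Phi \Uf^\star)^+$, whose image is $\Ima[\Uf^\star] = \Ima[\Jf^\star]$ because $(\Phi \Uf^\star)^+$ is surjective onto $\RR^n$ when $\Phi \Uf^\star$ has full column rank.

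For weak unbiasedness of $\duf^\star$, take any $u_0 \in \dMmfO^\star = \MmfO^\star$ and write $u_0 = \ufO^\star + \UfO^\star z$ for some $z$. Applying $\Phi$ and using $f_0 = \Phi u_0$ gives $f_0 - \Phi \ufO^\star = \Phi \UfO^\star z$; substituting into the expression for $\dufO^\star$ and applying the cancellation one more time yields $\dufO^\star = \ufO^\star + \UfO^\star z = u_0$, as required.

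The main obstacle, as I see it, lies in the Jacobian computation for $\dJf^\star$. One must justify carefully that local affinity of $f \mapsto \uf^\star$ really does permit treating $\Uf^\star$ and $V_f^\star$ as constant when differentiating (which holds because $\Ima[\Jf^\star]$ is locally constant wherever the map is locally affine), and then unwind the chain-rule cancellations cleanly enough that the surviving term is precisely $\Uf^\star (\Phi \Uf^\star)^+$. Once this Jacobian identity is in hand, both halves of the conclusion follow from the same full-rank observation about $\Phi \Uf^\star$.
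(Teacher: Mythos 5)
Your proposal is correct and follows essentially the same route as the paper: the core step in both is differentiating Eq.~\eqref{eq:debiasing} with $\Uf^\star, V_f^\star$ locally constant to get $\dJf^\star = \Uf^\star (\Phi \Uf^\star)^+$ (using the full column rank of $\Phi \Uf^\star$), which gives $\dMmf^\star = \Mmf^\star$. Your weak-unbiasedness step is a harmless algebraic variant — you cancel $(\Phi \UfO^\star)^+ \Phi \UfO^\star = \Id$ in the closed-form expression, whereas the paper invokes uniqueness of the solution of $\Phi u = f_0$ in $\MmfO^\star$ via the argmin characterization — but both rest on the same invertibility hypothesis and reach the same conclusion.
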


\begin{proof}
  Since $f \mapsto \uf^\star$ is locally affine,
  $f \mapsto \Uf^\star$ can be chosen locally constant.
  Deriving \eqref{eq:debiasing} for almost all $f$
  leads to the Jacobian $\dJf^\star$ of $\duf^\star$ given by
  \changemargin{-4pt}{0pt}{%
  \begin{eqnarray}
  \dJf^\star
  &=& \pd{\duf^\star}{f} 
  = \pd{\uf^\star}{f} + \Uf^\star (\Phi \Uf^\star)^+ \left(\pd{f}{f} - \Phi \pd{\uf^\star}{f}\right)
  = \Jf^\star + \Uf^\star (\Phi \Uf^\star)^+ (\Id - \Phi \Jf^\star) \nonumber\\
  &=& \Uf^\star V_f^\star + \Uf^\star (\Phi \Uf^\star)^+ (\Id - \Phi \Uf^\star V_f^\star)
  = \Uf^\star (\Phi \Uf^\star)^+
  ~,
  \end{eqnarray}}%
  since $\Phi \Uf^\star$ has full column rank due to the assumption that
  $\Phi$ is invertible on $\Mmf^\star$%
  \ifthenelse{\boolean{arxiv}}{ (see Appendix \ref{sec:details_proof1}).}{.}
  It follows that
  \changemargin{-2pt}{-2pt}{%
   \begin{eqnarray}
    \dMmf^\star
    &=& \duf^\star + \Ima[\dJf^\star]
    = \uf^\star + \Uf^\star (\Phi \Uf^\star)^+ (f - \Phi \uf^\star) +
    \Ima[\Uf^\star (\Phi \Uf^\star)^+]\\
    &=& \uf^\star + \Ima[\Uf^\star (\Phi \Uf^\star)^+]
    = \uf^\star + \Ima[\Uf^\star] = \Mmf^\star~,
  \end{eqnarray}}%
  since $\Phi \Uf^\star$ has full column rank.
  Moreover, for any $u_0 \!\in\! \MmfO^\star$,
  the equation $\Phi u \!=\! f_0$ has a unique solution $u \!=\! u_0$ in $\MmfO^\star$
  since $\Phi$ is invertible on $\MmfO^\star$.
  Hence, $\dufO^\star \!=\! u_0$ is the unique solution of \eqref{eq:debiasing},
%  there is $z_0$ such that $u_0 = \ufO^\star + \UfO^\star z_0$,
%  and from \eqref{eq:debiasing}
%  \begin{eqnarray}
%    \dufO^\star
%    &=&
%    \ufO^\star + \UfO^\star (\Phi \UfO^\star)^+ (f_0 - \Phi \ufO^\star)
%    = \ufO^\star + \UfO^\star (\Phi \UfO^\star)^+ \Phi (u_0 - \ufO^\star)\\
%    &=&
%    \ufO^\star + \UfO^\star (\Phi \UfO^\star)^+ \Phi \UfO^\star z_0
%    = \ufO^\star + \UfO^\star z_0
%    = u_0
%  \end{eqnarray}
%  since again $\Phi \Uf^\star$ has full column rank,
  which concludes the proof.
  \hfill $\square$
\end{proof}

The next proposition shows that the condition
``$\Phi$ invertible on $\Mmf^\star$'' can be dropped
when looking at $u_f^\star$ and $\duf^\star$ through $\Phi$.
The debiasing becomes furthermore methodical.

\begin{prop}
  Assume $f \mapsto \uf^\star$ is locally affine for almost all $f$.
  Taking $\tilde{u}_f^\star$ defined in Eq.~\eqref{eq:debiasing},
  then the predictor $\Phi \tilde{u}_f^\star$  of $f_0 \!=\! \Phi u_0$
  is equal to $\Pi_{\Phi \Mm_f^\star}(f)$
  and is a methodical debiasing of $\Phi u_f^\star$.
\end{prop}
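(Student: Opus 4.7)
The plan is to first establish the identity $\Phi \tilde{u}_f^\star = \Pi_{\Phi \mathcal{M}_f^\star}(f)$ by a direct calculation, then differentiate to obtain the model subspace of $\Phi \tilde{u}_f^\star$, and finally evaluate at $f = f_0$ to read off methodical unbiasedness. The main point that changes compared to Theorem~\ref{lem:tilde_unbiased} is that $\Phi U_f^\star$ is no longer assumed to be injective, so we can no longer simplify $(\Phi U_f^\star)^+ \Phi U_f^\star$ to the identity; instead we rely only on the universal pseudoinverse identity $M M^+ M = M$.

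First, I would apply $\Phi$ to both sides of Eq.~\eqref{eq:debiasing} to obtain
\begin{equation*}
  \Phi \tilde{u}_f^\star = \Phi u_f^\star + (\Phi U_f^\star)(\Phi U_f^\star)^+ (f - \Phi u_f^\star).
\end{equation*}
The matrix $(\Phi U_f^\star)(\Phi U_f^\star)^+$ is the orthogonal projector on $\Ima[\Phi U_f^\star] = \Phi \, \Ima[J_f^\star]$, which is precisely the linear part of the affine subspace $\Phi \mathcal{M}_f^\star = \Phi u_f^\star + \Phi \, \Ima[J_f^\star]$. Hence the right-hand side is $\Pi_{\Phi \mathcal{M}_f^\star}(f)$, which proves the first claim.

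Next, using that $f \mapsto u_f^\star$ is locally affine, I would choose $U_f^\star$ locally constant and differentiate, as in the proof of Theorem~\ref{lem:tilde_unbiased}, writing $J_f^\star = U_f^\star V_f^\star$:
\begin{equation*}
  \tilde{J}_f^\star := \pd{(\Phi \tilde{u}_f^\star)}{f}
  = \Phi U_f^\star V_f^\star + (\Phi U_f^\star)(\Phi U_f^\star)^+ \bigl(\Id - \Phi U_f^\star V_f^\star\bigr).
\end{equation*}
Here the key step is the identity $(\Phi U_f^\star)(\Phi U_f^\star)^+(\Phi U_f^\star) = \Phi U_f^\star$, valid for any pseudoinverse, which collapses the $\Phi U_f^\star V_f^\star$ terms and yields $\tilde{J}_f^\star = (\Phi U_f^\star)(\Phi U_f^\star)^+ = \Pi_{\Phi \,\Ima[J_f^\star]}$. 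Therefore the model subspace of $\Phi \tilde{u}_f^\star$ is
\begin{equation*}
  \Phi \tilde{u}_f^\star + \Ima[\tilde{J}_f^\star] = \Phi \tilde{u}_f^\star + \Phi \, \Ima[J_f^\star] = \Phi \mathcal{M}_f^\star,
\end{equation*}
where the last equality uses that $\Phi \tilde{u}_f^\star \in \Phi \mathcal{M}_f^\star$. This coincides with the model subspace of $\Phi u_f^\star$, giving the first condition of Definition~\ref{eq:def_debiasing}.

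Finally, the methodical unbiasedness (which is stronger than weak unbiasedness and hence closes both conditions at once) is essentially a corollary of the first step: for any $u_0 \in \RR^N$, set $f = f_0 = \Phi u_0$, so that
\begin{equation*}
  \Phi \tilde{u}_{f_0}^\star = \Pi_{\Phi \mathcal{M}_{f_0}^\star}(f_0) = \Pi_{\Phi \mathcal{M}_{f_0}^\star}(\Phi u_0),
\end{equation*}
which is exactly the projection of the true $\Phi u_0$ onto the model subspace of $\Phi \tilde{u}_f^\star$ at $f_0$. I expect the main subtlety to be exactly the handling of the pseudoinverse without injectivity of $\Phi U_f^\star$, since this is the precise place where the present proposition departs from Theorem~\ref{lem:tilde_unbiased}; everything else is bookkeeping that mirrors the earlier calculation. \hfill $\square$
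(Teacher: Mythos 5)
Your proof is correct and takes essentially the same route as the paper's: both rest on the identity $\Phi U_f^\star (\Phi U_f^\star)^+ = \Pi_{\Ima[\Phi U_f^\star]}$, identify the predictor as $\Pi_{\Phi \Mm_f^\star}(f)$, and deduce methodical unbiasedness and the equality of model subspaces from there. Your explicit Jacobian computation using $M M^+ M = M$ merely spells out what the paper's terser remark $\Ima[\Pi_{\Ima[\Phi U_f^\star]}] = \Ima[\Phi U_f^\star]$ leaves implicit.
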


\begin{proof}
  Since
  $\Phi \Uf^\star (\Phi \Uf^\star)^+\!=\!\Pi_{\Ima[\Phi U_f^\star]}$,
  we have
  $\Phi \tilde{u}^\star_f \!=\! \Pi_{\Phi \Mm_f^\star}(f)$.
  As the orthogonal projector on its own model space,
  it is methodically unbiased.
  Moreover $\Ima[\Pi_{\Ima[\Phi U_f^\star]}] \!=\! \Ima[\Phi U_f^\star]$, hence
  $\Phi \tilde{u}_f^\star$ and $\Phi u^\star_f$ share the same model subspace.
  \hfill $\square$
\end{proof}

%||\Phi u - f||^2
%\Phi^* \Phi u - \Phi^* f = 0

\begin{rem}
As an immediate consequence, the debiasing
of any locally affine denoising algorithm is a methodical debiasing,
since $\Phi \tilde{u}_f^\star = \tilde{u}_f^\star$.
\end{rem}

%%% \paragraph{Exampes}
%%% In practice when $U^\star$ is know,
%%% the explicit application of $(\Phi U^\star)^+$ required for the computation of $\tilde{u}^\star$
%%% in \eqref{eq:debiasing} can be not trivial to compute,
%%% essentially when $\Phi$ is not diagonal with respect to $U^\star$.
%%% In such cases, the solution can still be obtained efficiently by using a conjugate gradient to
%%% solve the linear problem
%%% \begin{align}
%%%   \left(\begin{array}{cc}
%%%   \Phi^t \Phi & (U^\star_\bot)^t\\
%%%   U^\star_\bot & 0\\
%%%   \end{array}\right)
%%%   \left(\begin{array}{c}
%%%     \tilde{u}^\star - u^\star\\
%%%     \tilde{v}
%%%   \end{array}\right)
%%%   =
%%%   \left(\begin{array}{c}
%%%     \Phi (f - \Phi u^\star)\\
%%%     0\\
%%%   \end{array}\right)~.
%%% \end{align}
%%%
%%%
%%% A debiased version of the $\ell_1$ synthesis minimizer is given by applying $\Id_\Ii \Phi_\Ii^+$ to $f$
%%% once the support $\Ii$ has been identified from the $\ell_1$ synthesis solution.
%%% However, the explicit application of $\Phi_\Ii^+$ can be not trivial to compute,
%%% essentially when $\Phi$ is not diagonal in the canonical basis.
%%% In such cases, the solution can still be obtained efficiently by using a conjugate gradient on
%%% the linear problem
%%% \begin{align}
%%%   \left(\begin{array}{cc}
%%%   \Phi^t \Phi & \Id_{\Ii^c}^t\\
%%%   \Id_{\Ii^c} & 0\\
%%%   \end{array}\right)
%%%   \left(\begin{array}{c}
%%%     \tilde{u}^\LO\\
%%%     \tilde{v}
%%%   \end{array}\right)
%%%   =
%%%   \left(\begin{array}{c}
%%%     \Phi f\\
%%%     0\\
%%%   \end{array}\right)~.
%%% \end{align}

\bigskip
We focus in the next sections on the debiasing of
% {\color{red}\sout{cases} methods?} 
% where the model subspace $\Mmf^\star$ does not have an explicit expression,
% such that Eq.~\eqref{eq:debiasing} cannot be used directly.
estimators without explicit expression for $\Mmf^\star$,
meaning that Eq.~\eqref{eq:debiasing} cannot be used directly.
We first introduce an algorithm for the case of $\ell_1$ analysis
relying on the computation of the directional derivative $J^\star_f f$.
We propose next a general approach,
applied to an affine nonlocal estimator,
that requires $J^\star_f \delta$
for randomized directions $\delta$.

%%% We show that its debiased estimator $\duf^\star$ corresponds exactly
%%% to the derivative of $\uf^\star$ at $f$ in the direction $f$,
%%% and so its computation can be done with the same complexity as the
%%% algorithm producing $\uf^\star$.
%%% In other cases, where the directional derivative does not
%%% correspond 
%%% %anymore 
%%% to $\duf^\star$, we 
%%% % propose
%%% % a technique to 
%%% compute $\duf^\star$ 
%%% % that only requires
%%% with
%%% a few runs of an algorithm of the same complexity as the
%%% one producing $\uf^\star$.
%%% We apply this technique to an affine version of the nonlocal-means.

\section{Debiasing the $\ell_1$ analysis minimization}

From now on, the dependency of all quantities
with respect to the observation $f$ will be dropped for the sake of simplicity.
Given a linear operator $\Gamma \!\in\! \RR^{L \times N}$,
the $\ell_1$ analysis minimization
reads, for $\lambda > 0$, as
\changemargin{-2pt}{-2pt}{%
\begin{equation}\label{eq:l1analysis}
  E(u, f) = \frac{1}{2} \norm{\Phi u - f}^2 + \lambda \norm{\Gamma u}_1~.
\end{equation}}%
%For the sake of notation simplicity, we will denote in the following
%$u^\ALO$ by $u^\star$ and all other quantities will be understanded to hold true
%with respect to $u^\ALO$.
Provided $\Ker \Phi \cap \Ker \Gamma \!\!=\!\! \{ 0 \}$,
there exists a solution given implicitly, see \cite{vaiter2013local}, as
\changemargin{-2pt}{-2pt}{%
\begin{equation}\label{eq:l1analysis_solution}
  u^\star = U(\Phi U)^+ f - \lambda U(U^t \Phi^t \Phi U)^{-1} U^t (\Gamma^t)_\Ii s_\Ii
\end{equation}}%
for almost all $f$ and
where $\Ii \!=\! \enscond{i}{(\Gamma u^\star)_i \ne 0} \!\subseteq\! [L] \!=\!\{1, \ldots, L\}$
is called the co-support of the solution,
% $s_k \!=\! \sign((\Gamma u^\star)_k)$,
$s \!=\! \sign(\Gamma u^\star)$,
$U\!=\!U_f^\star$ is a matrix whose columns form a basis of $\Ker[\llic{\Gamma}]$
and $\Phi U$ has full column rank.
Note that $s_\Ii$ and $U$ are locally constant almost everywhere since the co-support
is
% proven being
stable
% against
with respect to
small perturbations \cite{vaiter2013local}.
It then follows that the model subspace is implicitly
defined as $\Mm^\star \!=\! \Ima[U] \!=\! \Ker[\llic{\Gamma}]$,
and so, the $\ell_1$ analysis minimization
suffers from a
% systematic
weak bias equal to
$-\lambda U(U^t \Phi^t \Phi U)^{-1} U^t (\Gamma^t)_\Ii s_\Ii$.
Given that $u^\star \in \Ima[U]$ and it is locally affine,
its weak debiased solution is defined for almost all $f$ as
\changemargin{-2pt}{-2pt}{%
\begin{equation}\label{eq:debiasing_al1}
  \tilde{u}^\star
  = U (\Phi U)^+ f~.
\end{equation}}%

\paragraph{The $\ell_1$ synthesis} \cite{tibshirani1996regression,donoho1994ideal} consists in
taking $\Gamma\!=\!\Id$, hence $U\!=\!\Id_\Ii$, so \eqref{eq:l1analysis_solution} becomes
\changemargin{-2pt}{-2pt}{%
\begin{equation}
  u^\LO_{\Ii} = (\Phi_{\Ii})^+ f - \lambda ((\Phi_{\Ii})^t \Phi_{\Ii})^{-1} s_{\Ii}
  \qandq
  u^\LO_{(\Ii)^c} = 0~.
\end{equation}}%
Its model subspace is implicitly defined as
$\Mm^\LO \!=\! \Ima[\Id_{\Ii}]$,
its weak bias is $-\lambda \Id_{\Ii}((\Phi_\Ii)^t \Phi_{\Ii})^{-1} s_\Ii$
and its weak debiasing is $\tilde{u}^\star = \Id_\Ii (\Phi_\Ii)^+ f$.
Subsequently, taking $\Phi = \Id$ leads to the soft-thresholding presented earlier.

\paragraph{The anisotropic Total-Variation (TV)} \cite{rudin1992nonlinear}
is a particular instance of \eqref{eq:l1analysis}
where $u_0 \in \RR^N$ can be identified to a $d$-dimensional discrete signal,
for which $\Gamma \! \in \! \RR^{L \times N}$, with $L\!=\!dN$, is
the concatenation of the discrete gradient operators
in each canonical directions.
In this case $\Ii$ is the set of indexes where the solution has discontinuities (non-null gradients)
and $\Mm^\TV$ is the space of piece-wise constant signals sharing the same discontinuities
as the
% ones of the
solution.
Its weak bias reveals a loss of contrast: a shift of intensity on each piece
depending on its surrounding and the ratio between its perimeter and its area,
as shown, e.g., in \cite{strong2003edge}.
Note that the so-called {\it staircasing} effect of TV regularization is encoded
in our framework as a model bias, and
is therefore not reduced by our debiasing technique.
Strategies devoted to the reduction of this effect
have been studied in, \!e.g., \!\cite{louchet2011total}.
%\charles{J'ai rajoute une petite phrase sur le staircasing, ca vous va?}
\bigskip

Since in general $u^\star$ has no explicit solutions,
it is usually estimated thanks to an iterative algorithm
that can be expressed as a sequence $u^k$
%will
converging to $u^\star$.
The question we address is how to compute $\tilde{u}^\star$ in practice, i.e.,
to evaluate Eq.~\eqref{eq:debiasing_al1}, or more precisely, how to jointly build
a sequence $\tilde{u}^k$
% that  will
converging to $\tilde u^\star$.

We propose a technique that relies on the observation that,
given \eqref{eq:l1analysis_solution}, for almost all $f$,
the Jacobian $J^\star$ of $u^\star$ at $f$ applied to $f$, leads to Eq.~\eqref{eq:debiasing_al1}, i.e.,
\changemargin{-2pt}{-2pt}{%
\begin{equation}
  J^\star [f] = U(\Phi U)^+ f = \tilde{u}^\star
\end{equation}}%
since $U$ and $s_\Ii$ are locally constant
\cite{vaiter2013local}.
%In fact, $\Phi J^\star$ is the
%orthogonal projector on $\Ima[\Phi U]$.
We so define a sequence $\tilde{u}^k$
which is, up to a slight modification,
the closed-form derivation of the primal-dual sequence $u^k$ of \cite{CP}.
Most importantly, we provide a proof of its convergence towards $\tilde{u}^\star$.
% Up to a slight modification of this principle,
% we provide in the following a proof of convergence when
% $u^k$ is the sequence corresponding to a primal-dual
% algorithm.

Note that other debiasing techniques could be
employed for the $\ell_1$ analysis, e.g., using
iterative hard-thresholding \cite{herrity2006sparse,blumensath2008iterative},
refitting techniques \cite{efron2004least,Lederer13},
post-refinement techniques based an Bregman divergences
and nonlinear inverse scale spaces
\cite{osher2005iterative,burger2006nonlinear,xu2007iterative}
or with ideal spectral filtering in the analysis sense
\cite{gilboa2014total}.
\ifthenelse{\boolean{arxiv}}{%
We invite the interested reader to have a look at Appendix \ref{sec:comp_debiaising}
for a study of the advantages and limitations of such methods
compared to ours.
}

\subsection{Primal-dual algorithm}

Before stating our main result, let us recall some of the properties of
primal-dual techniques. Dualizing the $\ell_1$ analysis norm $u \mapsto \lambda \norm{\Gamma u}_1$,
the primal problem can be reformulated as the following saddle-point problem
\changemargin{-2pt}{-2pt}{%
\begin{equation}\label{eq:l1analysis_dual}
  z^\star = \uargmax{z \in \RR^L} \min_{u \in \RR^N}  \frac{1}{2} \|\Phi u-f\|^2+\left< \Gamma u, z\right> - \iota_{B_\lambda}(z)
\end{equation}}%
where $z^\star \in \RR^L$ is the dual variable, and $B_\lambda=\enscond{z}{\norm{z}_\infty\leq \lambda}$ is the $\ell_\infty$ ball.

%%% The next proposition gives some relations between the dual variable $z^\star$
%%% and some quantities involved in Eq.~\eqref{eq:l1analysis_solution} that will be
%%% at the heart of the proof for convergence of our debiasing procedure.
%%%
%%% \begin{prop}\label{l1analysis_prop}
%%%   The following relations hold true
%%%   \begin{equation*}
%%%     \Gamma^t z^\star = \Phi^t (f - \Phi u^\star)~, \quad
%%%     U^t \Gamma^t z^\star = \lambda U^t \Gamma^t_\Ii s_\Ii \qandq
%%%     z^\star_\Ii = \lambda s_\Ii~.
%%%   \end{equation*}
%%% \end{prop}
%%%
%%% \begin{proof}
%%%   Studying the first order optimality conditions gives
%%%   the first relation.
%%%   Injecting \eqref{eq:l1analysis_solution} in this first relation
%%%   leads to the second relation since
%%%   \begin{equation}
%%%     U^t \Gamma^t z^\star
%%%     %&=
%%%     %U^t \Phi^t (f - \Phi u^\star)
%%%     %\\
%%%     =
%%%     U^t \Phi^t f
%%%     -
%%%     U^t \Phi^t (- \Phi U(\Phi U)^+ f + \lambda \Phi U(U^t \Phi^t \Phi U)^{-1} U^t \Gamma^t_\Ii s_\Ii)
%%%   \end{equation}
%%%   and $\Phi U$ has full column rank.
%%%   The third relation is a necessary condition to ensure $z^\star$ maximizes
%%%   \eqref{eq:l1analysis_dual}.
%%% \end{proof}

\paragraph{First order primal-dual optimization.}

Taking $\sigma\tau<\frac1{\|\Gamma\|_2^2}$, $\theta\in[0,1]$ and initializing (for instance,) $u^0=v^0=0 \in \RR^N$, $z^0=0 \in \RR^L$, the primal-dual  algorithm of \cite{CP} applied to problem \eqref{eq:l1analysis_dual} reads
\changemargin{-2pt}{-2pt}{%
\begin{equation}\label{algo:normal}\left\{\begin{array}{ll}
z ^{k+1}&=\Pi_{B_\lambda}(z^k+\sigma \Gamma v^k),\\
u^{k+1}&=(\Id+\tau\Phi^t\Phi) ^{-1}\left(u^k+\tau( \Phi^tf-\Gamma^t(z^{k+1}))\right),\\
v^{k+1}&=u^{k+1}+\theta(u^{k+1}-u ^k),
\end{array}\right.\end{equation}}%
where the projection of $z$ over $B_\lambda$ is done component-wise as
\changemargin{-2pt}{-2pt}{%
\begin{equation}
  \Pi_{B_\lambda}(z)_i=
  \left\{
    \begin{array}{ll}
      z_i & \text{if} \quad |z_i|\leq \lambda,\\
      \lambda\,  \sign(z_i) \quad \; & \otherwise.
    \end{array}
  \right.
\end{equation}}%
The primal-dual sequence $u^k$ converges to
a solution $u^\star$ of \eqref{eq:l1analysis} \cite{CP}.
We assumed here that $u^\star$ verifies \eqref{eq:l1analysis_solution}
with $\Phi U$ full-column rank.
This could be enforced as shown in \cite{vaiter2013local},
but it did not seem to be necessary in our experiments.

\subsection{Debiasing algorithm}

As pointed out earlier, the debiasing of $u^\star$ consists in applying the
Jacobian matrix $J^\star$ at $f$ to $f$ itself.
This idea leads to the proposed debiasing algorithm that constructs a sequence of debiased
iterates from the original biased primal-dual sequence with initialization
$\tilde u^0=\tilde v^0=0 \in \RR^N$, $\tilde z^0=0 \in \RR^L$
as follows
\begin{eqnarray}\label{algo:diff}
&&
\left\{\begin{array}{ll}
\tilde z^{k+1}&=\Pi_{z^k+\sigma \Gamma v^k}(\tilde z^k+\sigma \Gamma \tilde v^k),\\
\tilde u^{k+1}&=(\Id+\tau \Phi^t\Phi)^{-1}\left(\tilde u^k+\tau(\Phi^t f-\Gamma^t\tilde z^{k+1})\right),\\
\tilde v^{k+1}&=\tilde u^{k+1}+\theta(\tilde u^{k+1}-\tilde u ^k),
\end{array}\right.\\
\whereq &&
\Pi_{z^k+\sigma \Gamma v^k}(\tilde z_i)=
\left\{
\begin{array}{ll}
  \tilde z_i \quad \; & \text{if} \quad |z^k+\sigma \Gamma v^k|^{}_i\leq \lambda+\beta,\\
  0 & \otherwise.
\end{array}
\right. \nonumber
\end{eqnarray}
with $\beta \geq 0$.
%{\color{red} ce que j'ai ecrit  me plait moyen, les choses ne sont pas bien organisees (faut-il parler en premier ou en second du fait que le point important est que le projecteur converge?)}
Note that when $\beta=0$,
deriving $z^k$, $u^k$ and $v^k$ for almost all $f$ at $f$ in the direction $f$ using the chain rule
leads to the sequences $\tilde z^k$, $\tilde u^k$ and $\tilde v^k$ respectively
(see also \cite{deledalle2014stein}).
However, as shown in Theorem \ref{prop:convergence}, it is important to choose $\beta > 0$ to
guarantee the convergence of the sequence\footnote{In practice, $\beta$ can be chosen
as the smallest positive floating number.}.

\begin{thm}\label{prop:convergence}
Let $\alpha>0$ be the minimum non zero value%
\footnote{If $|\Gamma u^\star|_i = 0$ for all $i \in [L]$,
the result remains true for any $\alpha>0$.}
of $|\Gamma u^\star|_i$ for all $i \in [L]$.
Choose $\beta$ such that $\alpha\sigma>\beta>0$.
The sequence $\tilde u^k$ defined in \eqref{algo:diff} converges
to the debiasing $\tilde u^\star$ of $u^\star$.
\end{thm}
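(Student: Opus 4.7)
The plan is to show that after finitely many iterations the threshold-like operator $\Pi_{z^k + \sigma \Gamma v^k}$ in \eqref{algo:diff} coincides with a single fixed linear projector, and then to identify the resulting stationary recursion as the Chambolle--Pock algorithm applied to the constrained least-squares problem whose unique minimiser is $\tilde u^\star$.

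First, I would use the convergence of the biased primal-dual iterates, $u^k \to u^\star$, $v^k \to u^\star$ and $z^k \to z^\star$ with $z^\star$ an optimal dual variable, to analyse the limit of $z^k + \sigma \Gamma v^k$. By the KKT conditions of \eqref{eq:l1analysis_dual} one has $z^\star_i = \lambda \sign((\Gamma u^\star)_i)$ for $i \in \Ii$, while $|z^\star_i| \leq \lambda$ and $(\Gamma u^\star)_i = 0$ for $i \notin \Ii$. Consequently,
\[
|z^\star + \sigma \Gamma u^\star|_i \geq \lambda + \sigma \alpha > \lambda + \beta \quad (i \in \Ii),
\qquad
|z^\star + \sigma \Gamma u^\star|_i \leq \lambda < \lambda + \beta \quad (i \notin \Ii),
\]
where the assumption $0 < \beta < \sigma \alpha$ gives both strict inequalities. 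By continuity, these inequalities transfer to $|z^k + \sigma \Gamma v^k|_i$ for all $k \geq K_0$ large enough, so that for every such $k$ the operator $\Pi_{z^k + \sigma \Gamma v^k}$ reduces to the fixed orthogonal projector $\mathcal{P}^\star$ onto the subspace $S = \{z \in \RR^L \,:\, z_\Ii = 0\}$.

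Next I would recognise that $\mathcal{P}^\star = \mathrm{prox}_{\sigma \iota_S}$, so the recursion \eqref{algo:diff} from $k \geq K_0$ on is exactly the primal-dual algorithm of \cite{CP} where the dual constraint $\iota_{B_\lambda}$ has been replaced by $\iota_S$. A direct computation of the Fenchel conjugate, $(\iota_S)^*(y) = \iota_{\{y_{\Ii^c}=0\}}(y)$, rewrites the associated primal problem as
\[
\min_{u \in \RR^N} \tfrac{1}{2}\|\Phi u - f\|^2 + \iota_{\{(\Gamma u)_{\Ii^c}=0\}}(u) = \min_{u \in \Mm^\star} \tfrac{1}{2}\|\Phi u - f\|^2,
\]
using $\Mm^\star = \Ker[\Gamma_{\Ii^c}] = \Ima[U]$. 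Since $\Phi U$ has full column rank, this problem admits the unique minimiser $\tilde u^\star = U(\Phi U)^+ f$ (apply \eqref{eq:contrained_least_square} with $b=0$ and $A=U$). The step-size condition $\sigma\tau < 1/\|\Gamma\|_2^2$ is unchanged, so invoking the convergence theorem of \cite{CP} on the shifted initial data $(\tilde u^{K_0}, \tilde v^{K_0}, \tilde z^{K_0})$ yields $\tilde u^k \to \tilde u^\star$.

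The only delicate step is the finite-time identification of the co-support in the first paragraph: without the positive margin $\beta$, off-support coordinates sitting on the boundary of the dual ball ($|z^\star_i| = \lambda$) could oscillate indefinitely between the two branches of $\Pi_{z^k + \sigma \Gamma v^k}$, preventing the iteration from ever becoming stationary. The admissible window $0 < \beta < \sigma \alpha$ is precisely what provides a uniform slack on both sides, keeping on-support coordinates strictly above the threshold and off-support coordinates strictly below it. Once this freezing has been secured, the rest is a direct application of the standard convergence theory of Chambolle--Pock.
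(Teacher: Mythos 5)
Your proposal is correct and follows essentially the same route as the paper: finite-time freezing of the projection (onto $F_{\Ii}=\smallenscond{z\in\RR^L}{z_{\Ii}=0}$) guaranteed by the margin $0<\beta<\sigma\alpha$ together with the convergence of the biased primal-dual sequence, and then identification of the frozen recursion as the Chambolle--Pock algorithm for the least-squares problem constrained to $\Mm^\star$, whose unique solution is $\tilde u^\star$ since $\Phi U$ has full column rank. Your conjugacy computation $(\iota_{F_\Ii})^*=\iota_{F_{\Ii^c}}$ is exactly the content of the paper's Lemma~\ref{lem:sad}, derived in the opposite direction, so there is no substantive difference.
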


Before turning to the proof of this theorem, let us introduce a first lemma.
\newcommand{\ut}{\tilde u}
\begin{lem}\label{lem:sad}
The debiasing $\ut^\star$ of $u^\star$ is the solution of the saddle-point problem
\changemargin{-3pt}{-3pt}{%
\begin{equation}\label{pb:sad}
  \min_{{\ut\in\RR^N}}\max_{\tilde z\in \RR^L}  \|\Phi{\ut}-f\|^2+\left<\Gamma\ut,\tilde z\right>- \iota^{}_{{F_{\Ii}}}(\tilde z),
\end{equation}}%
where $\iota^{}_{F_{\Ii}}$ 
%is the convex conjugate of $\iota^{}_{F_{\Ii^c}}$, and $\iota^{}_{F_{\Ii^c}}$ 
is  the indicator function of
 the convex set ${F^{}_{\Ii}}\!=\!\enscond{p\in \RR^L}{p^{}_{\Ii}\!=\!0}.$
%\jrs{vector space}
%${F^{}_{\Ii^c}}\!=\!\enscond{p\in \RR^L}{p^{}_{\Ii^c}\!=\!0}.$\jo{why not using only the iota
%notation with mentioning the conjugacy,
%except in the proof part.}
\end{lem}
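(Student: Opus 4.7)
The plan is to collapse the inner maximization over $\tilde z$ to a hard constraint on $\tilde u$, reducing the saddle-point to a constrained least-squares problem whose unique minimizer is known explicitly.

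First I would compute the inner maximum. Since $\iota_{F_\Ii}(\tilde z)$ is $+\infty$ unless $\tilde z_\Ii = 0$, we have
\begin{equation*}
  \max_{\tilde z \in \RR^L} \langle \Gamma \tilde u, \tilde z\rangle - \iota_{F_\Ii}(\tilde z)
  = \max_{\tilde z_{\Ii^c} \in \RR^{|\Ii^c|}} \langle (\Gamma \tilde u)_{\Ii^c}, \tilde z_{\Ii^c} \rangle,
\end{equation*}
which is $0$ if $(\Gamma \tilde u)_{\Ii^c} = 0$ and $+\infty$ otherwise. In other words, the inner max is the indicator function of the linear subspace $\{\tilde u \in \RR^N : (\Gamma \tilde u)_{\Ii^c} = 0\}$. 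By definition of $U$ as a basis of the kernel of the submatrix of $\Gamma$ indexed by $\Ii^c$, this subspace is exactly $\Ima[U] = \Mm^\star$.

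Next I would substitute this back into \eqref{pb:sad}, obtaining the equivalent constrained problem
\begin{equation*}
  \min_{\tilde u \in \Ima[U]} \|\Phi \tilde u - f\|^2.
\end{equation*}
Writing $\tilde u = U y$ for $y \in \RR^n$ and using the standing assumption that $\Phi U$ has full column rank, this becomes a plain overdetermined least-squares problem in $y$, with unique solution $y^\star = (\Phi U)^+ f$. Hence the unique primal solution of \eqref{pb:sad} is
\begin{equation*}
  \tilde u^\star = U (\Phi U)^+ f,
\end{equation*}
which coincides with the debiasing defined in Eq.~\eqref{eq:debiasing_al1}.

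The argument is essentially bookkeeping; the only step that requires any care is the identification of the implicit constraint produced by the inner maximization with the model subspace $\Ima[U]$, i.e., justifying that imposing $(\Gamma \tilde u)_{\Ii^c} = 0$ is the same as requiring $\tilde u \in \Ima[U]$. This is immediate from the definition of $U$ recalled after \eqref{eq:l1analysis_solution}, so there is no genuine obstacle.
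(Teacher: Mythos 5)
Your proof is correct and is essentially the paper's argument run in the opposite direction: the paper starts from the constrained least squares $\min_{\tilde u\in\Ker[\Gamma_{\Ii^c}]}\|\Phi\tilde u-f\|^2$ and rewrites the constraint via the Fenchel identity $\iota_{F_{\Ii^c}}(\Gamma\tilde u)=\max_{\tilde z}\langle\Gamma\tilde u,\tilde z\rangle-\iota_{F_\Ii}(\tilde z)$, while you eliminate $\tilde z$ by computing that same inner maximum explicitly, then solve the resulting constrained problem using the full column rank of $\Phi U$. Both hinge on the same facts ($\iota_{F_\Ii}^*=\iota_{F_{\Ii^c}}$ and $\Ima[U]=\Ker[\Gamma_{\Ii^c}]$), so there is no substantive difference.
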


\begin{proof}
As $\Phi U$ has full column rank, the debiased solution is the unique solution
of the constrained least square estimation problem
\changemargin{-3pt}{-3pt}{%
\begin{equation}\label{pb:ls}
  \ut^\star = U(\Phi U)^+ f = \uargmin{\ut\in\mathcal{U}^\star} \|\Phi \ut-f\|^2~.
\end{equation}}%
Remark that $\ut\in\mathcal{U}^\star\!=\!\Ker[\llic{\Gamma}]%\Leftrightarrow \ut\in \llic{\Gamma}\Leftrightarrow \llic{\Gamma}\ut=0
\Leftrightarrow(\Gamma\ut)_{\Ii^c}=0\Leftrightarrow \iota^{}_{F_{\Ii^c}}(\Gamma\ut)=0$, where ${F^{}_{\Ii^c}}\!=\!\enscond{p\in \RR^L}{p^{}_{\Ii^c}\!=\!0}$.

Using Fenchel transform,
$\iota^{}_{F_{\Ii^c}}(\Gamma\ut)\!=\!\max_{\tilde z}\left<\Gamma\ut,\tilde z\right>- \iota^{*}_{{F_{\Ii^c}}}(\tilde z)$,
where $\iota^{*}_{{F_{\Ii^c}}}$ is the convex conjugate of $\iota^{}_{{F_{\Ii^c}}}$.
Observing that $\iota^{}_{F_{\Ii}}\!=\!\iota^{*}_{F_{\Ii^c}}$ concludes the proof.
\hfill $\square$
\end{proof}

Given Lemma \ref{lem:sad}, replacing $\Pi_{z^k+\sigma \Gamma v^k}$ in
\eqref{algo:diff}
by the projection onto ${F_{\Ii}}$, i.e.,
\changemargin{-3pt}{-3pt}{%
\begin{equation}\label{proj_converged}
\Pi_{{F_{\Ii}}}(\tilde z)^{ }_{\Ii^c}=\tilde z^{ } _{\Ii^c}~ \qandq \Pi_{{F_{\Ii}}}(\tilde z)_{\Ii}^{}=0~,
\end{equation}}%
leads to
the primal-dual algorithm of \cite{CP} applied to problem \eqref{pb:sad}
which converges to the debiased estimator $\ut^\star$.
It remains to prove
that the projection $\Pi_{z^k+\sigma \Gamma v^k}$ defined in \eqref{algo:diff}
converges to $\Pi_{{F_{\Ii}}}$ in finite time.

\begin{proof}[Theorem \ref{prop:convergence}]
First consider $i\!\in\!\Ii$, i.e., $|\Gamma u^\star|^{}_i\!>\!0$.
By assumption on $\alpha$, $|\Gamma u^\star|^{}_i\!\ge\!\alpha>0$.
Necessary $z_i^\star\!=\!\lambda \sign(\Gamma u^\star)_i$ in order to maximize
\eqref{eq:l1analysis_dual}.
Hence, $|z^\star+\sigma \Gamma u^\star|^{}_i\!\ge\!\lambda+\sigma\alpha$.
Using the triangle inequality shows that
\changemargin{-2pt}{-2pt}{%
\begin{equation}
\lambda+\sigma\alpha\leq|z^\star+\sigma \Gamma u^\star|^{}_i\leq|z^\star-z^k|^{}_i+\sigma |\Gamma u^\star-\Gamma v^k|^{}_i+|z^k+\sigma \Gamma v^k|^{}_i~.
\end{equation}}%
Choose $\epsilon\!>\!0$ sufficiently small such that $\sigma\alpha-\epsilon(1+\sigma ) \!>\! \beta$.
From the convergence of the primal-dual algorithm of \cite{CP},
the sequence $(z^k,u^k,v^k)$ converges to $(z^\star,u^\star,u^\star)$.
Therefore, for $k$ large enough,
$|z^\star-z^k|^{}_i \!<\! \epsilon$, $|\Gamma u^\star-\Gamma v^k|^{}_i \!<\! \epsilon$, and
\changemargin{-2pt}{-2pt}{%
\begin{equation}
  |z^k+\sigma \Gamma v^k|^{}_i\geq\lambda+\sigma\alpha-\epsilon(1+\sigma ) > \lambda+\beta ~.
\end{equation}}%

Next consider $i\!\in\!\Ii^c$, i.e., $|\Gamma u^\star|_i\!=\!0$, where by definition $|z^\star|_i \!\leq\! \lambda$.
Using again the triangle inequality shows that
\changemargin{-2pt}{-2pt}{%
\begin{equation}
|z^k+\sigma \Gamma v^k|^{}_i\leq|z^k-z^\star|^{}_i+\sigma |\Gamma v^k - \Gamma u^\star|^{}_i+|z^\star|^{}_i~.
\end{equation}}%
Choose $\epsilon\!>\!0$ sufficiently small such that $\epsilon(1+\sigma ) \!<\! \beta$.
As $(z^k,u^k,v^k) \to (z^\star,u^\star,u^\star)$, for $k$ large enough,
$|z^k-z^\star|^{}_i \!<\! \epsilon$, $|\Gamma v^k-\Gamma u^\star|^{}_i \!<\! \epsilon$, and
\changemargin{-2pt}{-2pt}{%
\begin{equation}
|z^k+\sigma \Gamma v^k|^{}_i \!<\! \lambda + \epsilon(1+\sigma ) \leq \lambda + \beta~.
\end{equation}}%
It follows that for $k$ sufficiently large $|z^k+\sigma \Gamma v^k|^{}_i \!\leq\! \lambda+\beta$ if and only if $i\!\in\!\Ii^c$, and hence
$\Pi_{z^k+\sigma K v^k}(\tilde z)\!=\!\Pi_{{F_{\Ii}}}(\tilde z)$.
As a result, all subsequent iterations of \eqref{algo:diff}
will solve \eqref{pb:sad}, and hence from Lemma \ref{lem:sad}
this concludes the proof of the theorem.
\hfill $\square$
\end{proof}

\section{Debiasing other affine estimators}

In most cases, $U^\star$ cannot be computed in reasonable memory load and/or time,
such that Eq.~\eqref{eq:debiasing} cannot be used directly.
However, the directional derivative, i.e., the application of $J^\star$ to a direction
$\delta$, can in general be obtained
with an algorithm of the same complexity as the one providing $u^\star$.
If one can compute the directional derivatives for any direction,
a general iterative algorithm for the computation of $\tilde{u}^\star$ can be
derived as given in Algorithm 1.

The proposed technique relies on the fact that given $n \!=\! \dim(\Mm^\star)$
uniformly random directions
$\delta_1, \ldots, \delta_n$ on the unit sphere of $\RR^P$,
$J^\star \delta_1, \ldots, J^\star \delta_n$ forms a basis of $\Ima[J^\star]$
almost surely.
Given this basis, the debiased solution can so be retrieved from \eqref{eq:debiasing}.
Unfortunately, computing the image of the usually large number $n$
of random directions can be computationally prohibitive.

The idea is to approach the debiased solution
by retrieving only a low dimensional subspace of $\Mm^\star$
leading to a small approximation error.
Our greedy heuristic is to chose random perturbations around the current residual
(the strength of the perturbation being controlled by a parameter $\epsilon$).
As soon as $\epsilon >0$, the algorithm
converges in $n$ iterations as explained above.
But, by focusing in directions guided by the current residual,
the algorithm refines in priority the directions
for which the current debiasing gets significantly away from the data $f$, i.e.,
directions that encodes potential remaining bias.
Hence, the debiasing can be very effective even though
a small number of such directions has been explored.
We notice in our experiments that with a small value of $\epsilon$,
this strategy leads indeed to a satisfying debiasing,
close to convergence, reached in a few iterations.
%(about $10$ iterations in our experiments).

\paragraph{The nonlocal-means example.}
The block-wise nonlocal-means proposed in \cite{buades2005nlmeans}
can be rewritten as an instance of
the minimization problem \eqref{eq:variationnal_model}
with
\changemargin{-4.5pt}{-4.5pt}{%
\begin{equation}
  E(u, f) = \frac{1}{2} \sum_{i,j} w_{i,j} \norm{\Pp_i u - \Pp_j f}^2
  \qwithq
  w_{i,j} = \phi\left(\frac{\norm{\Pp_i f - \Pp_j f}^2}{2 \sigma^2}\right)
\end{equation}}%
where $i \in [n_1] \times [n_2]$ spans the whole image domain,
$j-i \in [ -s, s] \times [-s, s]$ spans a limited search window domain
and $\sigma^2$ is the noise variance.
We denote by $\Pp_i$ the linear operator extracting the patch at pixel $i$ of size $(2p + 1) \times (2p + 1)$.
Note that we assume periodical conditions such that all quantities
remain inside the image domain.
The kernel $\phi : \RR^+ \to [0, 1]$ is a decreasing function
which is typically a decay exponential function.
Taking $\phi$ piece-wise constant%
\footnote{For instance, by quantification
  on a subset of predefined values in $[0, 1]$.}%
, leads to
computing $u^\star$ and its Jacobian at $f$ applied to $\delta$ for almost all $f$
as follow
\changemargin{-4.5pt}{-4.5pt}{%
\begin{equation}
  u^\star_i = \frac{\sum_j \bar{w}_{i,j} f_j}{\sum_j \bar{w}_{i,j}}
  \qandq
  (J^\star \delta)_i =
  \frac{\sum \bar{w}_{i,j} \delta_j}{\sum_j \bar{w}_{i,j}}
  \qwithq
  \bar{w}_{i,j} = \sum_k w_{i-k, j-k}
\end{equation}}%
where $k \in [-p, p] \times [-p, p]$ spans the patch domain.
Note that the values of $w$ and $\bar{w}$ can be obtained by
discrete convolutions leading to an algorithm with complexity in $O(N s^2)$,
independent of the half patch size $p$%
\ifthenelse{\boolean{arxiv}}{ (see Appendix \ref{sec:algo_nlmeans}).}{.}

With such a choice of $\phi$, the block-wise nonlocal filter becomes a
piece-wise affine mapping of $f$ and hence Algorithm 1
applies.\\[-1.5em]

\begin{figure}[!t]
\centering
\small
\vspace{-0.5em}
\begin{minipage}{\linewidth}
\rule{\linewidth}{2px}
\begin{algorithmic}[0]
  \State \textbf{Algorithm 1} General debiasing pseudo-algorithm for the computation of $\tilde{u}^\star$.
\end{algorithmic}
\vspace{-0.2cm}
\rule{\linewidth}{1px}
\begin{algorithmic}[0]
  \State \makebox[1.5cm][l]{\textbf{Inputs:}} $f \in \RR^P$, $u^\star \in \RR^N$, $\delta \in \RR^P \to J^\star \delta \in \RR^N$, $\epsilon>0$.
  \State \makebox[1.5cm][l]{\textbf{Outputs:}} $\tilde{u}^\star \in \RR^N$ and $U \in \RR^{N \times n'}$ with $n' \leq n$ an orthonormal family of $\Ima[J^\star]$
   \State \vspace{-0.6em}
  \State \makebox[1.5cm][l]{Initialize} $U \leftarrow [\;]$
  \Repeat { until $\tilde{u}^\star$ reaches convergence}
  \State \makebox[1.5cm][l]{Generate} $\delta \leftarrow \eta/\norm{\eta}, \eta \sim \Nn_P(0, \Id)$
  \hfill (perturbation ensuring convergence)
  \State \makebox[1.5cm][l]{Compute} $\displaystyle u' \leftarrow J^\star (f - \Phi \tilde{u}^\star + \epsilon \delta)$
  \hfill (perturbed image of the current residual)
  \State \makebox[1.5cm][l]{Compute} $\displaystyle e \leftarrow u' - U (U^t u')$
  \hfill (projection~on~the~orthogonal~of~the current~$\Mm^\star$)
  \State \makebox[1.5cm][l]{Update} $U \leftarrow [ U \; e/\norm{e} ]$
  \State \makebox[1.5cm][l]{Update} $\tilde{u}^\star \leftarrow u^\star + U ((\Phi U)^+ (f - \Phi u^\star))$
  \color{white} \Until\vspace{-1.4em}
\end{algorithmic}
\vspace{-0.5em}
\rule{\linewidth}{1px}
\vspace{-1.5em}
\end{minipage}
\vspace{1em}
\end{figure}

%\jo{I removed the comment on the supp. material...}
%In practice, we recommend to choose
%$\phi$ as the indicatrice function of the set $[2-3\sqrt{2}/(2p+1), 2+3\sqrt{2}/(2p+1)]$
%which is an interval centered on the expectation of
%a $\chi^2$ distribution with $(2p+1)^2$ degrees of freedom,
%and its half-width is chosen as two thirds of the standard-deviation.

\begin{figure}[!t]
  \centering
  \vspace{-0.5em}
  \includegraphics[width=0.8\linewidth]{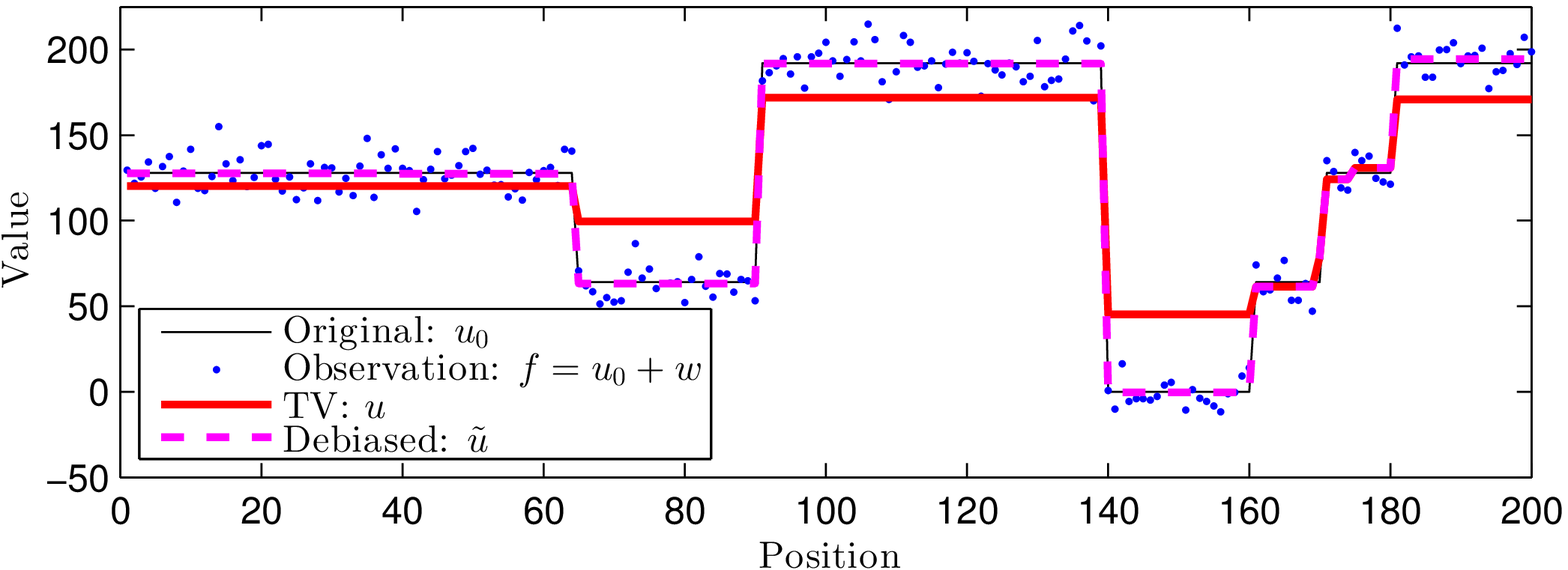}
  \vspace{-1em}
  \caption{%
    Solutions of 1D-TV and our debiasing on a piece-wise constant signal.
  }
  \vspace{1em}
  \label{fig:tv1d}
  \centering
  \begin{minipage}[b]{0.32\linewidth}%
    \includegraphics[width=\linewidth,viewport=256 284 512 474,clip]{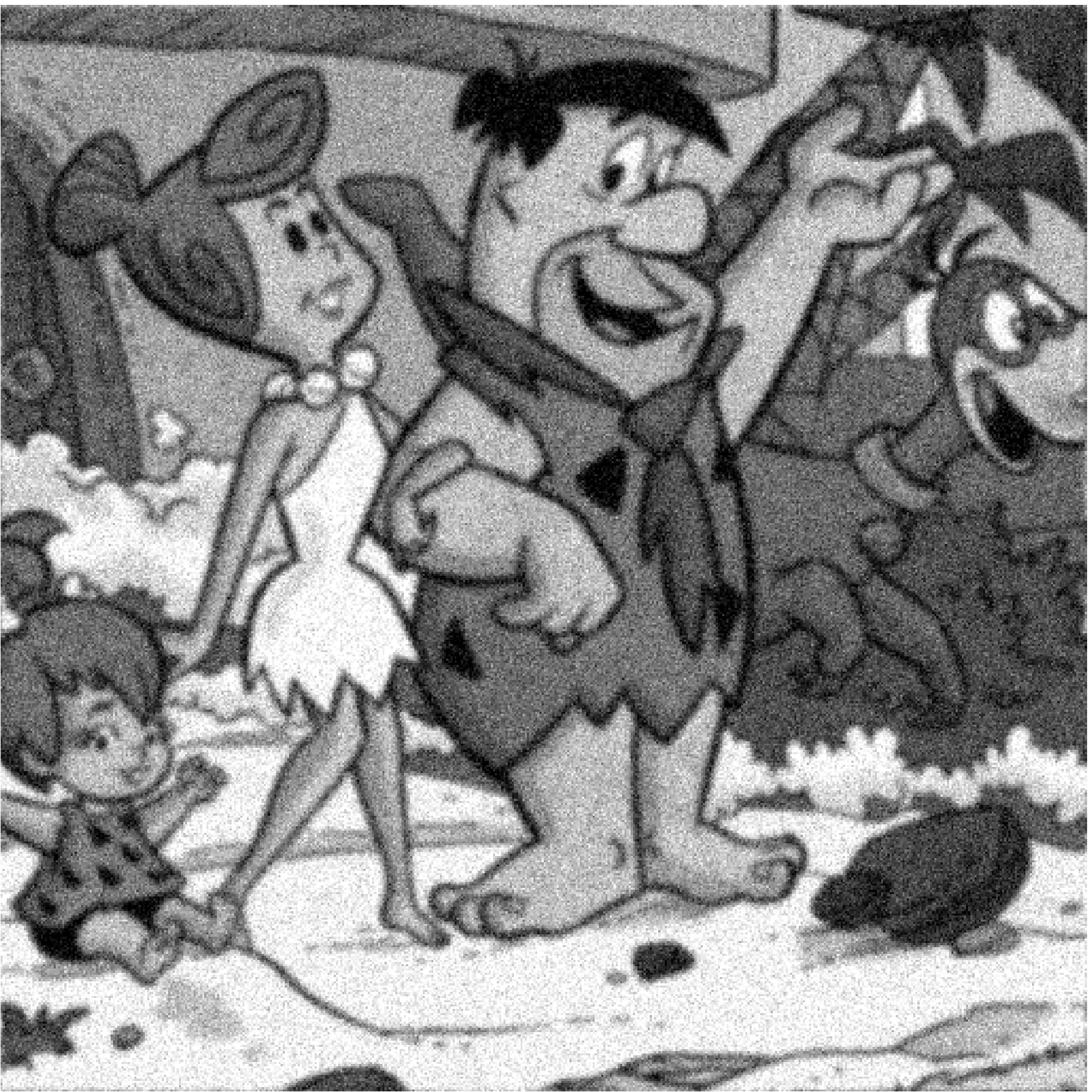}%
    \hspace{-\linewidth}%
    \begin{minipage}{\linewidth}%
      \vspace{1.7em}%
      \centering%
      \color{black} \tiny \textbf{ PSNR 19.13 / SSIM 0.76 }%
    \end{minipage}%
  \end{minipage}%
  \hfill%
  \begin{minipage}[b]{0.32\linewidth}%
    \includegraphics[width=\linewidth,viewport=256 284 512 474,clip]{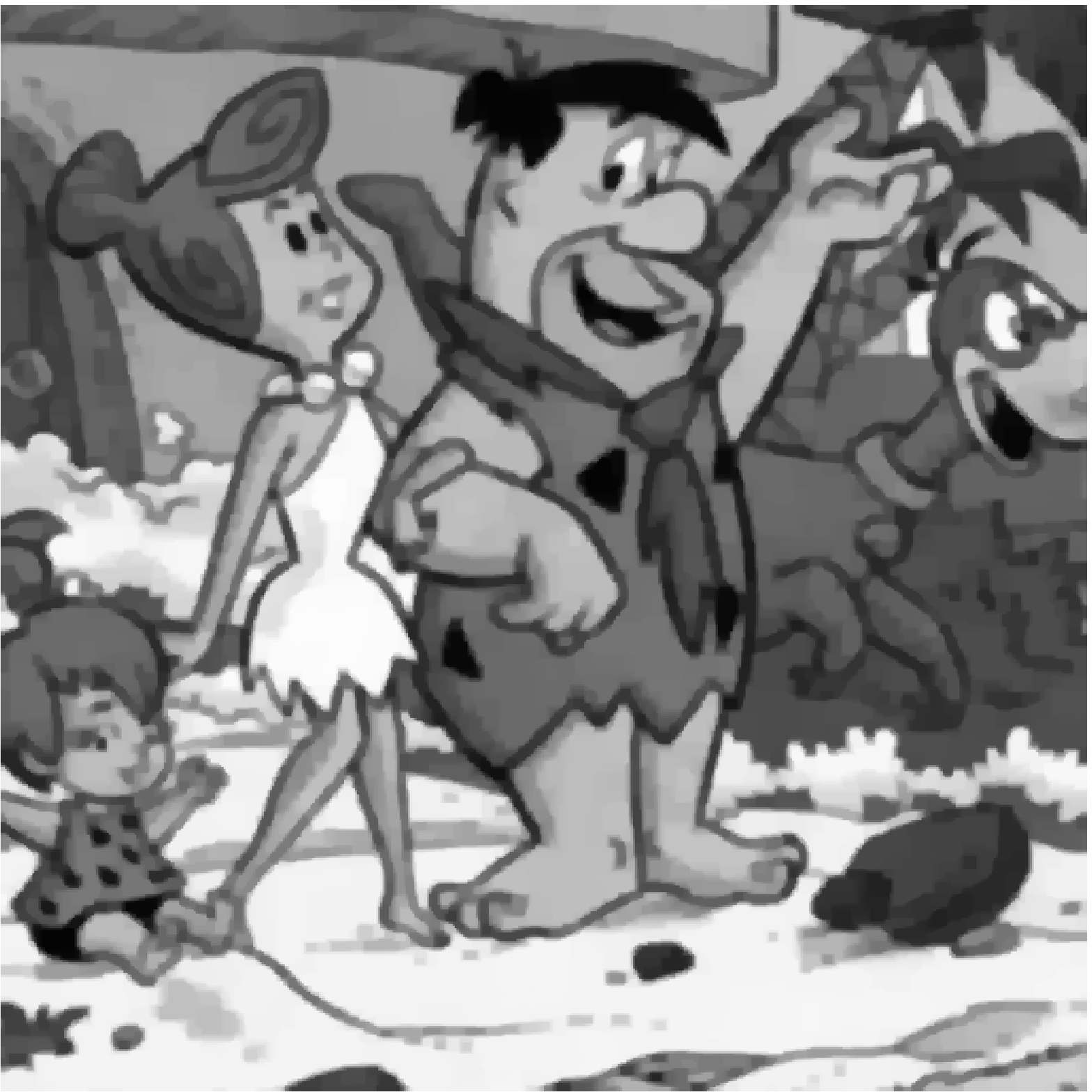}%
    \hspace{-\linewidth}%
    \begin{minipage}{\linewidth}%
      \vspace{1.7em}%
      \centering%
      \color{black} \tiny \textbf{ PSNR 20.61 / SSIM 0.80 }%
    \end{minipage}%
  \end{minipage}%
  \hfill%
  \begin{minipage}[b]{0.32\linewidth}%
    \includegraphics[width=\linewidth,viewport=256 284 512 474,clip]{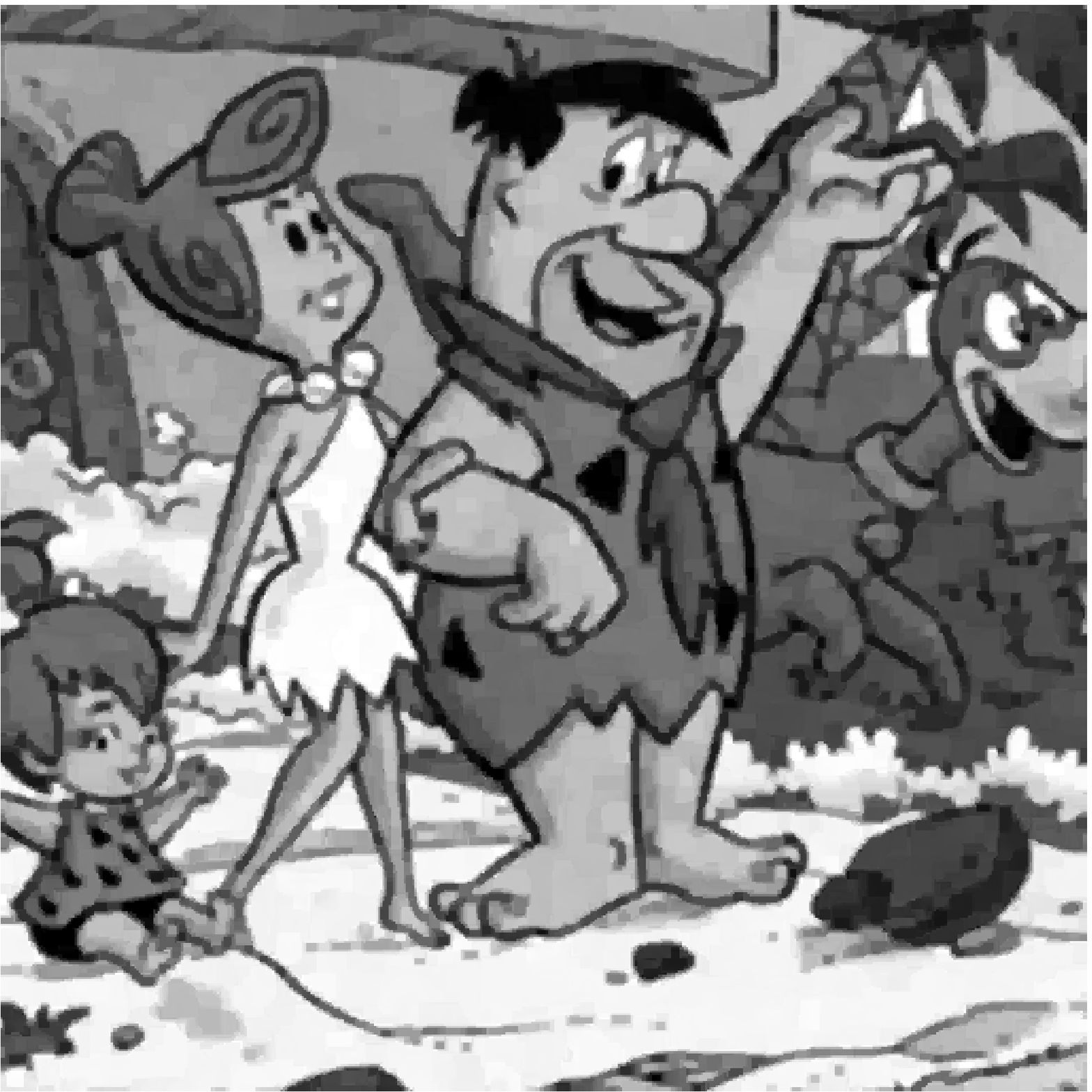}%
    \hspace{-\linewidth}%
    \begin{minipage}{\linewidth}%
      \vspace{1.7em}%
      \centering%
      \color{black} \tiny \textbf{ PSNR 21.90 / SSIM 0.87 }%
    \end{minipage}%
  \end{minipage}%
  \vspace{-0.5em}
  \caption{%
    (left) Blurry image $f \!=\! \Phi u_0 \!+\! w$,
    (center) TV $u^\star$,
    (right) debiased $\tilde{u}^\star$.
  }
  \vspace{1em}
  \label{fig:tv2d_deconv}
  \begin{minipage}[b]{0.32\linewidth}%
    \includegraphics[width=\linewidth,viewport=0 46 256 236,clip]{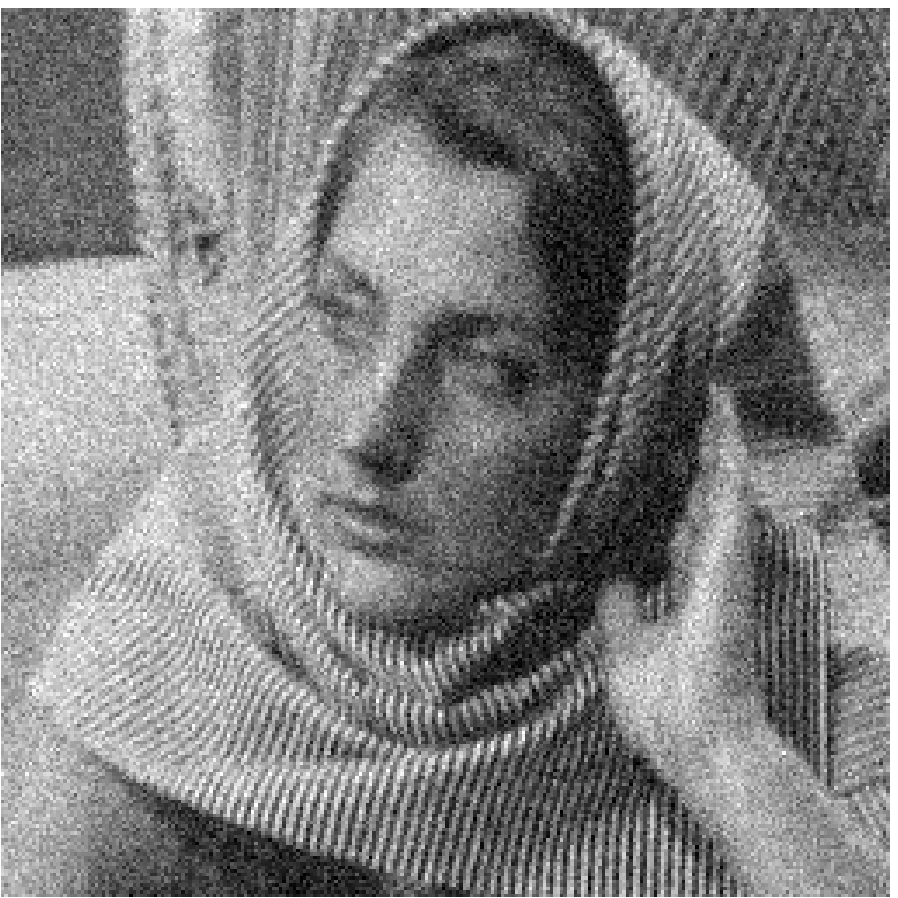}%
    \hspace{-\linewidth}%
    \begin{minipage}{\linewidth}%
      \vspace{1.7em}%
      \centering%
      \color{black} \tiny \textbf{ PSNR 22.14 / SSIM 0.52 }%
    \end{minipage}%
  \end{minipage}%
  \hfill%
  \begin{minipage}[b]{0.32\linewidth}%
    \includegraphics[width=\linewidth,viewport=0 46 256 236,clip]{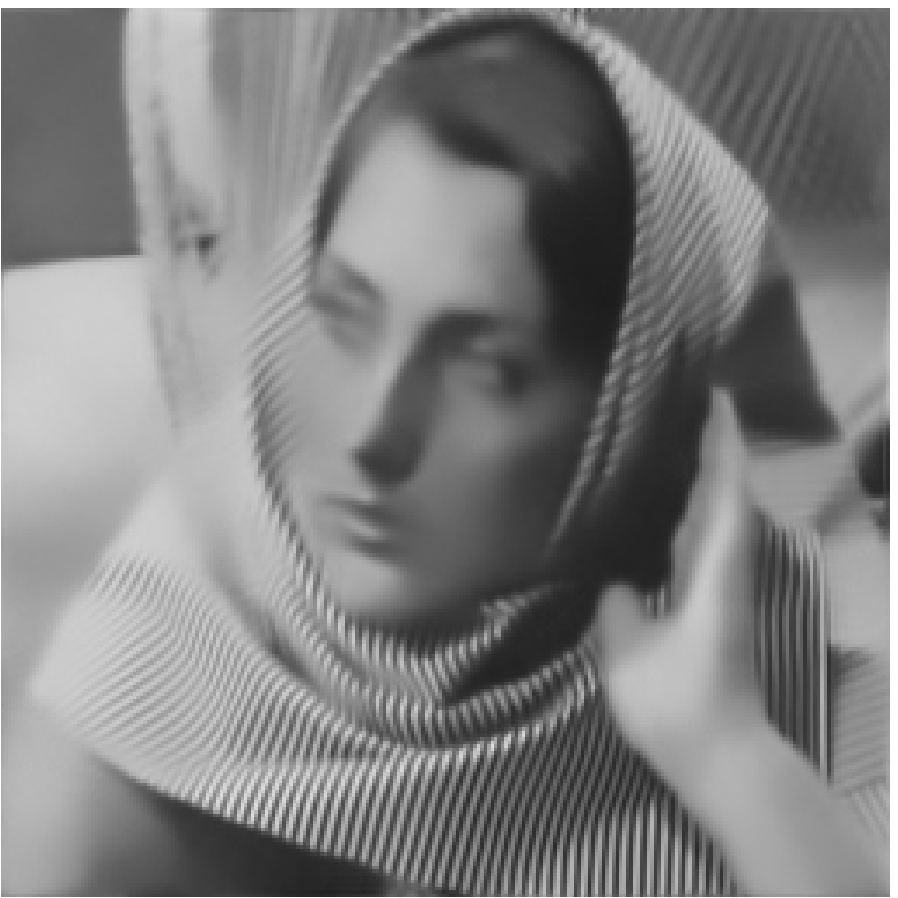}%
    \hspace{-\linewidth}%
    \begin{minipage}{\linewidth}%
      \vspace{1.7em}%
      \centering%
      \color{black} \tiny \textbf{ PSNR 27.89 / SSIM 0.82 }%
    \end{minipage}%
  \end{minipage}%
  \hfill%
  \begin{minipage}[b]{0.32\linewidth}%
    \includegraphics[width=\linewidth,viewport=0 46 256 236,clip]{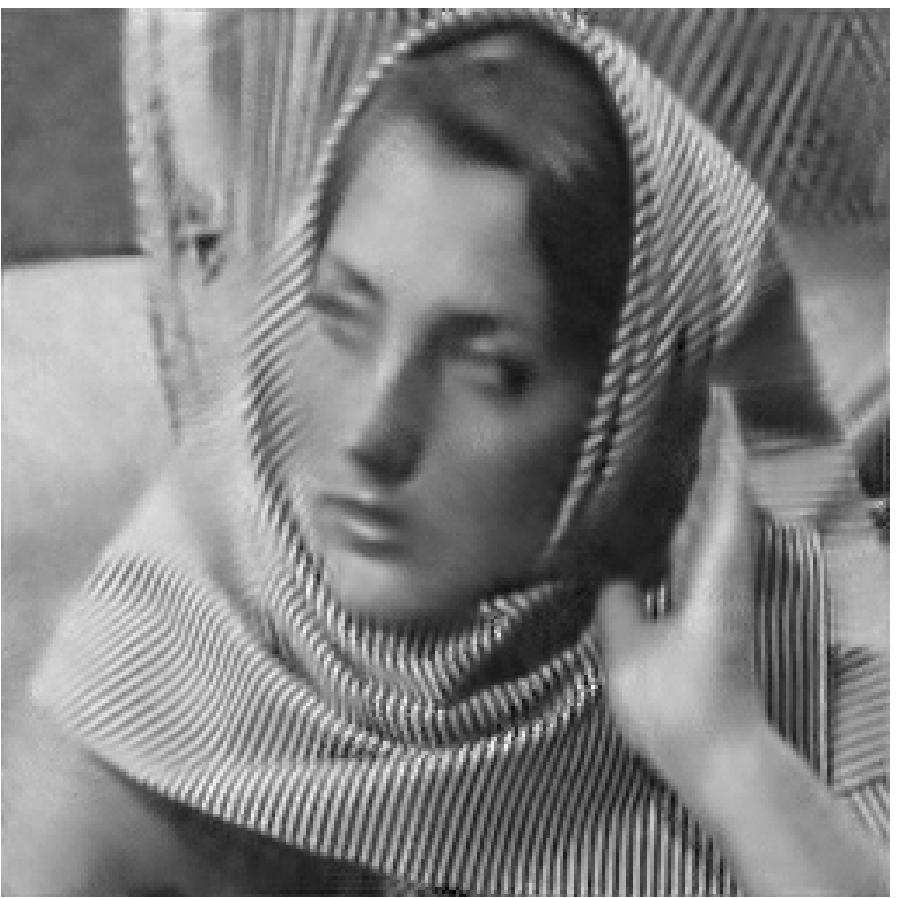}%
    \hspace{-\linewidth}%
    \begin{minipage}{\linewidth}%
      \vspace{1.7em}%
      \centering%
      \color{black} \tiny \textbf{ PSNR 29.17 / SSIM 0.87 }%
    \end{minipage}%
  \end{minipage}%
  \vspace{-0.5em}
  \caption{%
    (left) Noisy image $f \!=\! u_0 \!+\! w$,
    (center) nonlocal-means $u^\star$,
    (right) debiased $\tilde{u}^\star$.
  }
  \label{fig:nlm}
  \vspace{1em}
\end{figure}

\section{Numerical experiments and results}

Figure \ref{fig:tv1d} gives an illustration of
TV used for denoising a 1D piece-wise constant signal
in $[0, 192]$ and
damaged by additive white Gaussian noise (AWGN) with a standard deviation $\sigma\!=\!10$.
Even though TV has perfectly retrieved the
support of $\nabla u_0$ with one more extra jump, the intensities of some regions
are biased.
Our debiasing is as expected unbiased for every region.

Figure \ref{fig:tv2d_deconv} gives
an illustration of our debiasing of
2D anisotropic TV used for the restoration of
an $8bits$ approximately piece-wise constant image damaged by
AWGN with $\sigma\!=\!20$.
The observation operator $\Phi$ is a Gaussian convolution kernel of
bandwidth $2$px.
TV introduced a significant loss of contrast, typically for
the thin contours of the drawing, which are re-enhanced
by our debiased result.

Figure \ref{fig:nlm} gives an illustration of our iterative
debiasing for the block-wise nonlocal-means algorithm used
in a denoising problem for an $8bits$ image
enjoying many repetitive patterns
and damaged by AWGN with $\sigma=20$.
Convergence has been considered as reached after $4$ iterations only.
Our debiasing provides favorable results with many enhanced details
compared to the biased result.%
\ifthenelse{\boolean{arxiv}}{\\

Please refer to Appendix \ref{sec:supp_experiments}
for more details and experiments.
}{}

% \medskip
% {\color{red}3eme fois que tu mentionnes le supplementary, mais ok, car c'est 3 fois pour des raisons diff\'erentes.} 

%\subsubsection*{Acknowledgments.} The heading should be treated as a
%subsubsection heading and should not be assigned a number.

\section{Conclusion}

We have introduced in this paper a mathematical definition of debiasing which has led
to an effective debiasing technique that can remove
the method bias that does not arise from the unavoidable choice of the model.
This debiasing technique simply consists in applying a least-square estimation
constrained to the model subspace chosen implicitly by the
original biased algorithm.
Numerical experiments have demonstrated the efficiency of our technique in retrieving
the correct intensities while respecting the structure
of the original model subspace.
Our technique is nevertheless limited to locally affine estimators.
% Isotropic total variation, structured sparsity, nonlocal-means with smooth kernels
% are examples of estimators that are not yet handled by our debiasing technique.
% Future work should focus on extensions of our framework to such estimators.
Isotropic total variation, structured sparsity or nonlocal-means with smooth kernels
are  not yet handled by our debiasing technique, and left for future work.

\bibliographystyle{abbrv}

\end{document}